\newtheorem{theorem}{Theorem}
\newtheorem{lemma}{Lemma}
\newtheorem{corollary}{Corollary}
\newtheorem{example}{Example}
\def\boldsymbol#1{\setbox\ewb\hbox{$#1$}%
    \setlength{\deno}{-\wd\ewb+0.05em}{ #1}\hspace{\deno}{#1}}
\begin{document}

\baselineskip=22pt

\begin{frontmatter}

\title{Generalized maximum likelihood estimation of the mean of parameters of mixtures. With applications
to sampling and to observational studies.}

\author{\fnms{Eitan} \snm{Greenshtein}\ead[label=e2]{eitan.greenshtein@gmail.com}}
\address{Israel Census Bureau of Statistics; \printead{e2}}
\affiliation{Israel Census Bureau of Statistics}

\author{\fnms{Ya'acov} \snm{Ritov}
%\thanksref{t3}
\ead[label=e3]{yaacov.ritov@gmail.com}}
\address{University of Michigan; \printead{e3}}
%\thankstext{t3}{Research supported by an ISF grant.}
\affiliation{University of Michigan.}

\runauthor{ Greenshtein,   Ritov}

\maketitle

\begin{abstract}
Let $f(y\mid \theta), \; \theta \in \Omega$ be a parametric family, $\eta(\theta)$ a given function, and $G$ an unknown mixing
distribution. It is desired to estimate $E_G  (\eta(\theta))\equiv \eta_G$ based on independent observations $Y_1,...,Y_n$, where $Y_i \sim f(y\mid \theta_i)$, and $\theta_i \sim G$ are iid.

We explore the Generalized Maximum Likelihood Estimators (GMLE) for this problem. Some basic properties and representations
 of those
estimators are shown. In particular we suggest a new perspective,  of the weak convergence result by Kiefer and Wolfowitz (1956), with implications to a corresponding setup in which $\theta_1,...,\theta_n$ are {\it fixed} parameters.
We also relate the above problem, of estimating $\eta_G$, to nonparametric empirical Bayes estimation under a squared loss.

Applications of   GMLE  to  sampling problems are presented.
The performance of  the GMLE is demonstrated both in simulations and through a real data example.

Keywords: GMLE, Mixing distribution, Nonparametric Empirical Bayes, Sampling.

\end{abstract}

\end{frontmatter}

\section{Introduction}

Let $f_\theta(y) \equiv f(y\mid \theta), \; \theta \in \Omega, $ be a parametric family
of densities with respect to some measure $\mu$.
Let $G$ be an unknown mixture distribution. We observe
$Y_1,...,Y_n$, which are realizations of the following process.
Let  $\theta_i \sim G$, $i=1,...,n,$ be independent, where  conditional on $\theta_1,...,\theta_n$, $Y_i \sim f(y\mid \theta_i)$, $i=1,\ldots,n$, are independent.  Given a function $\eta( \theta)$, let $\eta_G\equiv\eta(G)\equiv E_G \eta(\theta)$. Based on our observations, it is desired to estimate
$\eta_G$.

%In the sequel, we use the notation $\theta$ both as an argument of a function, and also as a generic
%random variable which is distributed $G$. The specific meaning will be clear from the context.

Consider for example the case where $Y_i \sim N(\theta_i,1)$ and $\theta_i \sim G$ where $G$ is supported away from 0. If $\eta(\theta)=E_\theta(Y)$, then the natural estimator
for $\eta_G$ is $\frac{1}{n} \sum Y_i$, which is consistent and efficient. On the other hand, deriving a consistent estimator for $\eta_G$ is
less obvious if $\eta(\theta)={1}/{\theta}$. Yet, the Generalized Maximum Likelihood Estimator (GMLE) defined below, see \eqref{eq:hatG}, yields a simple consistent estimator for both functionals: If   $\hat{G}$ is a  GMLE estimator for $G$ a GMLE estimator of $\eta_G \equiv E_G \eta(\theta)$ is defined by
\begin{equation} \hat{\eta}_G \equiv E_{\hat{G}} \eta(\theta) \end{equation}

\noindent{\bf GMLE for $\boldsymbol{G}$.}
Given a distribution $G$ and a dominated family of distributions with densities {\nolinebreak$\{ f(y\mid \theta):\; \theta \in \Omega \}$}, define
$$ f_G(y)= \int f(y\mid \vartheta) dG(\vartheta).$$

Given observations $Y_1,...,Y_n$,
a GMLE $\hat{G}$ for $G$ (Kiefer and Wolfowitz, 1956) is defined  as:
\begin{equation} \label{eq:hatG} \hat{G}= \argmax_{{G}} \; \Pi f_{{G}}(Y_i); \end{equation}
the maximization is with respect to all probability distributions.
%The above defines a GMLE in a general setup with a general observed $Y$ and a general
%parametric family with densities $\{ f(y|\theta); \; \theta \in \Omega \}$, which will be %considered in Section \ref{sec:theor}.
Note, GMLE for $G$ is also referred  in the literature as Nonparametric Maximum Likelihood
Estimator (NPMLE). We use GMLE as a tribute to its originators,   Kiefer and Wolfowitz, (1956).
Traditionally, a GMLE estimator $\hat{G}$ for a mixture $G$ is approximated via EM algorithm on a finite subset of the parameter space, see the seminal paper, Laird (1978).
Koenker and Mizera (2014) suggested
exploitation of convex optimization techniques.
%See also, the quadratic programming approach
%of
%Greenshtein and Itskov (2018).

Much of the research on GMLE is about characterization of the GMLE $\hat{G}$ in terms of the size of its support, see, e.g.,
Lindsay (1995). Asymptotics and efficiency of the estimator $\hat{\eta}_G$ for $\eta_G$ is studied in semiparametric theory, see e.g.,
Bickel, et.a.l (1995).
%{\bf  reply to Comment 1 of the referee...}
One goal of this paper is to present some new problems and applications where GMLE estimators are useful:
we elaborate in particular on examples of `sampling with no response', `post-stratification', and `observational studies'.

We also relate GMLE estimators $\hat{\eta}_G$ to nonparametric empirical Bayes and compound decision problems, and provide useful and insightful representations.
%of GMLE estimators of the form $\hat{\eta}$, which are  related to Non-parametric empirical Bayes.
In nonparametric empirical Bayes the goal is to estimate  the specific values $\eta_i \equiv \eta(\theta_i)$, $i=1,...,n$,
under a given loss, mainly, as in our case, a squared loss. A mixing distribution $G$ induces a joint distribution of $(Y,\theta)$. We denote the corresponding conditional expectation of
$\theta$ conditional on
$Y$,  by $E_G(\theta\mid Y)$.
The nonparametric empirical Bayes approach  is to estimate
$\eta_i$, $i=1,...,n$ by the plug-in estimator
$\hat{\eta}_i=E_{\hat{G}}(\eta(\theta)\mid Y_i)$, see, the general approach in
the seminal papers of Robbins (1951, 1956, 1965), see the plug-in approach in,
e.g., Jiang and Zhang (2009), Koenker and Mizera (2014), and $g$-modeling in Efron(2014).  In subsection \ref{sec:eqre}
we show that
$\hat{\eta}_G$ may be represented as $\hat{\eta}_G=\frac{1}{n} \sum \hat{\eta}_i= \frac{1}{n} \sum E_{\hat{G}} (\eta(\theta) \mid  Y_i)$.
In the
case of $Y_i \sim N(\theta_i,1)$ with $\eta(\theta) = E_\theta Y$, $\theta_i \sim G$ are iid (and more generally)
we show, that $\hat{\eta}_G  \equiv E_{\hat{G}} \eta(\theta) = \frac{1}{n} \sum Y_i$.

Asymptotics of GMLE  estimators $\hat{\eta}_G$, mainly in terms of consistency, is explored both theoretically
and through simulations.
There are cases where  $\hat{G}$ is not unique, e.g., in situations  where $G$ is non-identifiable.
When $\hat{G}$ is not unique,
we consider as GMLE  any  $\hat{\eta}_G= E_{\hat{G}} \eta(\theta)$ that corresponds to any GMLE $\hat{G}$.
In such cases, as $n \rightarrow \infty$, often,  not every sequence of GMLE is consistent; we argue that GMLE estimators are still plausible and worthwhile, and suggest
``GMLE related'' Confidence-Intervals for  $\eta_G$.

Given  realizations $Y_1,...,Y_n$ as above, a  related problem, studied in Zhang (2005), is the prediction
of the realized $\sum \eta(\theta_i)$, where $\theta_i \sim G$ are iid.
His approach is also related to GMLE.
%e emphasize  the estimation problem which is studied
%more in the literature.
Zhang elaborates on the difference between the
estimation and the prediction
problems in terms of efficiency.
He also presents interesting applications where it is desired to estimate the realized $\sum \eta(\theta_i)$.
%As he remarks, the asymptotics and efficiency in the estimation problem is well studied in %semi-parametric theory.
%Nevertheless, we concentrate on the estimation problem, however

%We neglect efficiency issues.
%We study  some new applications
%and insights that are related to  GMLE estimators.

%

%\subsection{ The compound decision approach}

Our  problem may be formalized without considering the parameters, $\theta_i$s, as random and appealing to a mixture distribution $G$.
We consider  $\theta_1,...,\theta_n$ as unknown parameters and  $Y_i \sim f(y\mid \theta_i)$,  $i=1,...,n$, as independent (but not identically distributed)  observations. The goal is  to estimate $\sum \eta(\theta_i)$. Let $G^n$ be the empirical distribution of $\theta_1,...,\theta_n$. Let $\hat{G}^n$ be the GMLE based on $Y_1,...,Y_n$,
as defined in (\ref{eq:hatG}) (i.e., the GMLE pretending $\theta_1,\dots,\theta_n$ are \iid random variables sampled from a distribution $G$). Under
suitable triangular array formulation and conditions, the sequence  of signed  measures $(\hat{G}^n - G^n)$, converges weakly to the zero-measure.
However, a reference to such a general result is not known to us, and we give a proof in
Section \ref{sec:weak}. This weak convergence result motivates
estimating
 $\frac{1}{n}\sum \eta(\theta_i)= E_{G^n} \eta(\theta) =\eta_{G^n}$ by $ \;  E_{\hat{G}^n} \eta(\theta).$

As in most  of the literature, we often appeal to a mixing distribution $G$ and random  $\theta_i \sim G$, which makes
the formulation more convenient.
The two approaches of appealing to a mixing distribution $G$, versus avoiding it,
are analogous to empirical Bayes versus compound decision approaches.
See, e.g.,  Zhang (1997), Brown and Greenshtein (2009), and the fore mentioned
seminal papers by Robbins. It is also related to the distinction between incidental and  random nuisance
parameters, see, e.g., Pfanzagl (1993).

In Section 2 we present some motivating examples. In Section 3 we present some theoretical
results concerning representations of $\hat{\eta}_G$, and some asymptotics. In Section 4
we present simulations. In Section 5
we sketch a `GMLE related' Confidence Interval for $\eta_G$. In Section 6 we present a real data example taken from the Israeli Social Survey.

\section{Examples. \label{sec:Ex}}

A main motivation for our study of GMLE are the following sampling models and  problems in the context of stratification, and observational studies.

In stratified sampling  and in post-stratification, it is desired to have a very fine stratification.
Then, conditional on a fine stratum it appeals that missing observations are missing completely at random. Similarly in observational study (or, in convenience sampling), fine stratification makes the assumption that conditional
on the strata the success or failure of a treatment are independent  of the, often unknown, allocation
mechanism to treatment and control. However, under very fine natural stratification many strata  are likely
to be randomly empty, i.e., with no sampled observations.  In practice, when the number of observations is high the stratification used is fine to the level of existence of a meaningful number of empty cells. The GMLE method is a natural  way of handling
the resulting difficulties of empty strata.

We elaborate on the above.
Suppose that it is desired to  estimate the proportion of unemployed in the population.
%Let $pi'_j$ be the probability that subject $j$ of the survey is unemployed.
A sampled  subject will respond to the survey with unknown probability. The proportion of unemployed among the responders in the survey is strictly a biased estimator of their proportion in the population, since there is a strong observed correlation between employment status and response.  It is known that in the relevant surveys, subgroups  with higher unemployment rates have lower response rates. We may hope, however, that if we consider small enough and homogenous stratum, the willingness to answer and the answer will be, conditionally on the stratum, practically independent. Thus, the missing responses would be practically missed completely at random (MCAR) within each stratum. See, e.g., Little and  Rubin (2002).

Let $K_i$ be the number of full observations in Stratum $i$ (i.e., $K_i$ were sampled and responded). Let $X_i$ be the number of unemployed among the observations from Stratum $i$.
For simplicity, we consider a situation of $n$ strata
with equal weights. Our observations are $Y_i=(X_i,K_i)$,  $i=1,\dots,n$, where the conditional distribution of
$X_i$
conditional on $K_i$ is $B(K_i,p_i)$.
It is desired to estimate the population's proportion $p=n^{-1}\sum p_i$.
%In Section \ref{sec:gen}
%we will generalize to non-binary variables and unequal weights.

Similar considerations apply in observational studies. Suppose that conditional on a fine stratification, within each stratum
the probability of success or failure, under treatment or control, is independent  of the unknown allocation mechanism to treatment or control. Given a treatment, we thinks of
$X_i$, the number of successes of the treatment in stratum $i$, and $K_i$, the number of
times the treatment was applied in Stratum $i$. We observe $Y_i=(X_i,K_i)$. One approach
of analyzing such observational  data is to pair
observations based on their propensity score. Using our method we do not need  pairing, and
we may handle  even the extreme case, where  treatment was applied in one subset of strata, while control was
applied in another, disjoint, subset of strata.

%It is presented for binary variables, but may be generalized beyond, see Greenshtein and Ritov ( ). Consider a sample space with strata. In many cases a very fine stratification is
%desired, so that conditional Missing At Random property may be assumed.
%Consider for example the estimation of unemployment rate.
%The same applies in observational studies  where  two treatments are compared and it is %assumed that conditional
%on the stratum the (unknown) assignment mechanism  is independednt of the success %probability of the treatment.

\subsection{Sampling Models}

We consider the following  two realistic  scenarios,
where the sample size $K_i$ is random.  In light of Section \ref{sec:weak}, the following motivating models apply  to both setups of random and fixed
$\theta_1,...,\theta_n$. We use the notations and formulation of random $\theta_i$, $i=1,...,n$, for convenience.

Model (i) \emph{ Stratified sampling with non-response.}
A random
sample of  $\kappa_i$  subjects from stratum $i$ is sampled. The probability of a random  subject from stratum $i$ to
respond is $\pi_i \leq 1$.
Thus, the number of actual responses, $K_i$,
$K_i \sim B(\kappa_i, \pi_i)$ is a random variable.

Model (ii): \emph{  Post-Stratification}.
We assume that $K_i$ has a  $Poisson(\lambda_i)$ distribution, $i=1,2,\dots,n$.  This scenario is reasonable whenever we have a convenience sample,
observational studies, or, very low response rate.

In Model (i),  $\theta_i=(\theta_{i1},\theta_{i2}) \equiv (\pi_i,p_i)$,
where $\pi_i$ is the probability of response, while $p_i$ is the
proportion  (of, say, unemployed)  in stratum $i$.
Conditional on $\theta_i$, $X_i\mid K_i \sim B(K_i,p_i)$ and $K_i \sim B(\kappa_i, \pi_i)$. In Model (ii),  $\theta_i=(\theta_{i1},\theta_{i2}) \equiv (\lambda_i, p_i)$.
Given $\theta_i$, $X_i\mid K_i \sim B(K_i,p_i)$  as before,
and $K_i \sim Poisson(\lambda_i)$.

Suppose we are interested in estimating $\eta_G=E_G \eta(\theta)$, for $\eta(\theta_i)=p_i$.
If there are no empty
strata, i.e., $K_i>0$, $i=1,...,n$ then the obvious and naive estimator:
 $$\hat{p}_N=\frac{1}{n} \sum_{i=1}^n  \frac{X_i}{K_i},$$ is applicable.
The problem with
the above estimator is that many strata may be empty, i.e., with corresponding $K_i=0$.
When there are some empty strata, a common ad-hoc approach is of `collapsing strata' where after
the data is observed some empty strata are unified with  non-empty ones. An extreme collapsing is to a single stratum, which yields the  Extreme Collapsing estimator:

$$\hat p_{EC}=\frac{ \sum X_i}{\sum K_i},$$
which is, in fact, desirable when $p_1=...=p_n$ but not in general.

Another  way of handling the difficulty of empty strata, is to assume that strata are `missing at random', that is $K_i$ is uncorrelated
with $p_i$, and strata with $K_i=0$ may be simply  ignored.

Our approach, of applying GMLE,
handles cases with many empty strata in a natural way, which is not ad-hoc and does not rely heavily on missing at random type of assumptions.

Finding a GMLE for a two dimensional distribution $G$,
outside of the normal setup, only recently appears
in the literature, see,
e.g., Gu and Koenker (2017), Feng and Dicker (2018). One reason might be its recent popularity
due to Koenker and Mizera`s computational methods. Our Model (ii) is presented in Feng and Dicker, however, their motivation and context is very different than ours,
and is not related to sampling. In addition, they study the estimation of the
the individual $p_i$, $i=1,...,n$ via non parametric empirical Bayes, while we study the estimation of $\eta_G=E_{G} \eta(\theta)$, where $\eta(\theta_i)=p_i$. We also provide consistency results in the estimation of $\eta_G$.

Different related sampling models are
studied in Greenshtein and Itskov (2018).
In their censored-case, $n$ individuals are attempted to be interviewed,
where there are at most $\kappa^0$ attempts for each individual $i, \; i=1,...,n$. The probability of a response from individual $i$ in
any single attempt is $\pi_i$, and attempts are independent. Let $K_i \leq \kappa^0$ be the number of interviewing attempts of individual $i$, then $K_i$ is a truncated geometric variable. Let $Z_i$ be a 0-1 valued r.v., e.g., indicator of unemployment,
$p_i=P(Z_i=1)$. Suppose $\theta_i \equiv (\pi_i,p_i)$ are i.i.d distributed $G$.
It is desired to estimate $E_G Z_i = E_G(\eta(\theta))$, where $\eta(\theta)=p$.
The possible  outcomes, of  the attempted interviews of individual $i$, are
$(Z_i,K_i) \in \{ (1,K_i), \; (0,K_i),\; 1\le K_i \leq \kappa^0,\; \& \; NULL\}$, where
$NULL$ indicates non-response in $\kappa^0$ attempts.
This scenario is within our setup as one may think of individuals as strata of size one.

The truncated case of Greenshtein and Itskov (2018)
is similar to the above censored case, except that we do not know about the existence of items that did not respond.  Denote by $G^t$ the conditional distribution of $\theta$, conditional on
response. Then: $dG^t(\theta) \propto P_\theta (K_i \leq \kappa^0) dG(\theta)$. Let $\hat{G}^t$ be the GMLE for $G^t$, which is based only on the
(non-truncated) observations $\{i:\;  K_i \leq \kappa^0 \}$. Then,
when $\pi_i>0$ w.p.1,  we suggest the GMLE estimator
$$\hat{\eta}= \int \eta(\theta) d\hat{G}(\theta) \equiv
\frac{\int \eta(\theta)  P_\theta^{-1}(K_i \leq \kappa^0)   d\hat{G}^t(\theta)}{\int P_\theta^{-1}(K_i \leq \kappa^0) d\hat{G}^t(\theta)}.$$

\section{Asymptotic Results and equivalent representations of GMLE estimators.}
\label{sec:theor}

%\begin{Assumption}
%\label{as:one}
%The support  of the parameter space is bounded.
%\end{Assumption}

We consider mainly the model in which $\theta_i=(\lambda_i,p_i)$, $\theta_i \sim G$,  $i=1,...n$ are iid random variables and consider the estimation of the function $\eta(\theta_i)=p_i$.
In light of subsection \ref{sec:weak} below, analogous results may be derived when  $\theta_1,...,\theta_n$ are considered unknown parameters.

\subsection{Consistency and Asymptotics in Models (i) and (ii).}

We will now argue that under  (ii) if  $P(\lambda=0)=0$, then the expected value of $p$ can be consistently estimated, while under model (i) it cannot. The difference between the two models is that on the one hand, in model (ii) $K_i$ may get the values $0,1,2,\dots$, and as $n\to\infty$, the number of observed outcomes grows to infinity. On the other hand, in model (i), $\kappa_i$ is bounded by some finite $\kappa$, then the total number of outcomes is $(\kappa+1)(\kappa+2)/2$, which is not enough for the identification of the distribution or even the mean of of $p_i$. Here are the details.
	
\subsubsection{ Consistency in Model (ii)  \label{sec:II}}

In Model (ii)   we have consistency in the estimation of $\eta_G$,
when $P_G(\lambda=0) \equiv G(\{ \lambda=0 \})=0$,
as proved in the following theorem.

%if there exists no $\tilde{G}$ such that $f_G(x)=f_{\tilde{G}}(x)$ for every $x$.

It is convenient to re-parametrize
the problem, as follows. Given $X_i \sim B(K_i, p_i)$ conditional on $K_i$,
and $K_i \sim Poisson(\lambda_i)$.
Denote $W_{i1} \equiv X_i$, and $W_{i2}=K_i- X_i$. Then $W_{i1}$ and
$W_{i2}$ are  independent  Poissons conditional on $(\lambda_i,p_i)$, with corresponding parameters
$\xi_{1i} \equiv p_i \lambda_i$
and $\xi_{2i} \equiv (1-p_i)\lambda_i$.

Before presenting the theorem, recall that $G$ is identifiable in the model $f_G(y)=\int f_\vartheta(y) dG(\vartheta)$ if $f_G=f_{\tilde{G}}$ implies $G=\tilde{G}$.

\begin{theorem}
Let $G_\xi$ be the distribution of $(\xi_1,\xi_2)$ and  let $\hat{G}_\xi$ be the GMLE based on iid $(W_{i1}, W_{i2}), \; i=1,...,n$.  then $G_\xi$ is identifiable and  $\hat{G}_\xi$ converges weakly to $G_\xi$.
\end{theorem}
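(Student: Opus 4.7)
The plan is to follow the classical Kiefer--Wolfowitz (1956) roadmap for consistency of the nonparametric MLE of a mixing distribution, specialized here to a bivariate Poisson emission kernel. The argument splits into three steps: (i) identifiability of $G_\xi$ from the mixture marginal $f_{G_\xi}$; (ii) almost-sure tightness of the random sequence $\{\hat{G}_\xi\}$; and (iii) identification of any weak subsequential limit with $G_\xi$ via the likelihood inequality.

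For Step (i), conditional on $(\xi_1,\xi_2)$ the observation $(W_1,W_2)$ is a pair of independent Poissons, so the joint probability generating function of the mixture at $(s_1,s_2)\in[0,1]^2$ is
$$
\sum_{w_1,w_2\ge 0}s_1^{w_1}s_2^{w_2}f_{G_\xi}(w_1,w_2)
=\int e^{-\xi_1(1-s_1)-\xi_2(1-s_2)}\,dG_\xi(\xi_1,\xi_2),
$$
which is the bivariate Laplace transform of $G_\xi$. Since the Laplace transform determines a finite measure on $[0,\infty)^2$, $f_{G_\xi}=f_{\tilde G}$ forces $G_\xi=\tilde G$. For Step (ii), I would argue that if $\hat G_\xi$ placed mass at least $\varepsilon$ on $\{\xi_1+\xi_2\ge M\}$ for $M$ large, the induced mixture would severely underweight small cells $(w_1,w_2)$ whose empirical frequencies are bounded below by positive constants (such cells exist almost surely because $G_\xi$ itself does not concentrate at infinity). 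Quantifying this against the GMLE property $\prod_i f_{\hat G_\xi}(W_i)\ge\prod_i f_{G_\xi}(W_i)$ yields the required tightness.

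For Step (iii), I would pass to a subsequence along which $\hat G_\xi$ converges weakly to some $\tilde G$ and use the GMLE inequality
$$
\frac1n\sum_{i=1}^n\log\frac{f_{\hat G_\xi}(W_i)}{f_{G_\xi}(W_i)}\ge 0,
$$
combined with the SLLN on the right-hand side and an upper-semicontinuity / dominated-convergence argument on the left-hand side, to conclude
$$
E_{G_\xi}\log\frac{f_{\tilde G}(W)}{f_{G_\xi}(W)}\ge 0.
$$
Jensen's (information) inequality then forces $f_{\tilde G}=f_{G_\xi}$, and Step (i) upgrades this to $\tilde G=G_\xi$. Since every subsequential limit is $G_\xi$, the full sequence $\hat G_\xi$ converges weakly to $G_\xi$.

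The main obstacle is the interplay between Steps (ii) and (iii): because the parameter space $[0,\infty)^2$ is non-compact and $\log f_G(w)$ is not uniformly bounded in $G$, neither the tightness nor the limit passage is free. One needs a quantitative bound on how much mass $\hat G_\xi$ can push toward infinity, together with an integrable envelope for $\log f_{\hat G_\xi}(W)$ that is stable along subsequences. Once such control is in hand, the remaining steps are essentially the original Kiefer--Wolfowitz argument, with the Laplace-transform identifiability replacing the compact-support identifiability used in their univariate examples.
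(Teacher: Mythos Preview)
Your proposal is correct and follows essentially the same approach as the paper, which simply invokes Kiefer and Wolfowitz (1956) and defers identifiability of Poisson mixtures to Karlis and Xekalak (2005). Your Laplace-transform argument is precisely the content behind that citation, and your Steps (ii)--(iii) reconstruct the Kiefer--Wolfowitz consistency machinery that the paper treats as a black box; the non-compactness issue you flag is what their Assumption~5 (decay of $f(y\mid\theta)$ as $\theta\to\infty$) handles, and it is immediate for Poisson kernels.
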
 \label{thm:con}
\begin{proof}
The proof follows from Kiefer and Wolfowitz (1956). Checking the conditions is standard,
the identifiability condition is verified, e.g., by
Karlis and Xekalaki (2005).
\end{proof}

\begin {corollary}
 for any  function $\eta(\theta)$, such that $\eta(\theta)=\psi(\xi_1,\xi_2)$,
for  $\psi$ which is continuous  and bounded on the support of $(\xi_1,\xi_2)$ under $G$,
$E_{\hat{G}} \eta(\theta) \rightarrow E_G \eta(\theta)$. In particular, if under $G_\xi$, $P(\lambda{>}0)=1$, then $E_G p $ is identifiable.
\end{corollary}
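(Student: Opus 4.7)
The strategy is to reduce the statement to a direct application of the Portmanteau theorem, using Theorem 1 as the only nontrivial input.

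First I would note that the reparametrization $(\lambda,p)\mapsto (\xi_1,\xi_2)=(p\lambda,(1-p)\lambda)$ is a bijection from $\{\lambda>0,\,p\in[0,1]\}$ onto its image $\{(\xi_1,\xi_2): \xi_1\ge 0,\,\xi_2\ge 0,\,\xi_1+\xi_2>0\}$, with inverse $\lambda=\xi_1+\xi_2$, $p=\xi_1/(\xi_1+\xi_2)$. Since the assumption $P_G(\lambda=0)=0$ was built into the corollary's hypothesis (and is what Theorem 1 requires for weak convergence through its identifiability check), this bijection transports $G$ to $G_\xi$ and $\hat G$ to $\hat G_\xi$, and the problem becomes showing $E_{\hat G_\xi}\psi(\xi_1,\xi_2)\to E_{G_\xi}\psi(\xi_1,\xi_2)$.

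Next, Theorem 1 gives $\hat G_\xi\Rightarrow G_\xi$ (in probability with respect to the data). To turn this into convergence of expectations of $\psi$, I would invoke the standard Portmanteau characterization of weak convergence: $\int f\,d\hat G_\xi\to \int f\,dG_\xi$ for every continuous bounded function $f$ on the ambient space $\mathbb{R}_+^2$. Since $\psi$ is assumed continuous and bounded only on $\operatorname{supp}(G_\xi)$, which is a closed subset of $\mathbb{R}_+^2$, I would extend $\psi$ to a continuous bounded function $\tilde\psi$ on $\mathbb{R}_+^2$ by the Tietze extension theorem (keeping the bound). Applying Portmanteau to $\tilde\psi$ yields $E_{\hat G_\xi}\tilde\psi\to E_{G_\xi}\tilde\psi=E_{G_\xi}\psi$.

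The one point that requires a moment's care, and which I would flag as the main (minor) obstacle, is that $\hat G_\xi$ need not be supported inside $\operatorname{supp}(G_\xi)$, so the values of the extension $\tilde\psi$ off the support genuinely enter $E_{\hat G_\xi}\tilde\psi$. But because $\tilde\psi$ is bounded and $\hat G_\xi\Rightarrow G_\xi$ which concentrates on $\operatorname{supp}(G_\xi)$, any discrepancy between $E_{\hat G_\xi}\tilde\psi$ and $E_{\hat G_\xi}\psi$ (interpreted as $\tilde\psi$ restricted back to the support) vanishes in the limit; in particular, the limit $E_{G_\xi}\tilde\psi$ is independent of the extension chosen. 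Pulling back through the bijection gives $E_{\hat G}\eta(\theta)\to E_G\eta(\theta)$, completing the proof.
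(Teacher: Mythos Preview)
Your approach---Theorem~1 plus the Portmanteau characterization of weak convergence---is exactly the paper's implicit reasoning; the corollary is stated without proof as an immediate consequence of Theorem~1.

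Two small corrections. First, the assumption $P_G(\lambda=0)=0$ is \emph{not} part of the corollary's hypothesis, nor is it needed for Theorem~1: identifiability of $G_\xi$ holds for arbitrary mixtures of independent Poisson pairs, and the reparametrization $(\lambda,p)\mapsto(\xi_1,\xi_2)$ pushes forward $G$ to $G_\xi$ regardless. The condition $G(\{\lambda=0\})=0$ enters only in the paragraph \emph{after} the corollary, where it is used to pass from the regularized $\eta_\epsilon$ to $\eta(\theta)=p$. Second, your Tietze-extension detour is more careful than the paper, which appears to intend $\psi$ to be continuous and bounded globally (as in its own example $\psi_\epsilon(\xi_1,\xi_2)=\xi_1/(\xi_1+\xi_2+\epsilon)\,1\{\xi_1+\xi_2>0\}$); the phrase ``on the support'' is likely loose wording rather than a restriction that forces an extension argument. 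If one does take the restriction literally, note that your claimed vanishing of the discrepancy $E_{\hat G_\xi}\tilde\psi-E_{\hat G_\xi}\psi$ is not quite well-posed when $\psi$ is undefined off the support; the clean statement is simply that $E_{\hat G_\xi}\tilde\psi\to E_{G_\xi}\psi$ for \emph{every} bounded continuous extension $\tilde\psi$, so the limit is extension-independent.
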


The condition $P(\lambda{>}0)=1$ is necessary, since otherwise $P(\eta{=}0)>0$ while the distribution of $p$ conditioned on $\lambda=0$ is unidentified, since there are no observations on $p$ from the atom at $\{\lambda=0\}$. On the other hand, if  $P(\lambda{>}0)=1$ than  $\eta_\epsilon(\theta)=\psi_\epsilon(\xi_1,\xi_2)
\equiv\frac{\xi_1}{\xi_1 + \xi_2+\eps} 1\{\xi_1+\xi_2{>}0\}$ is bounded and continuous, $E_{\hat{G}}
\eta_\eps(\theta) \rightarrow E_G \eta_\eps(\theta)$ for any $\epsilon>0$. But  $\lim_{\epsilon \rightarrow 0} E_G \eta_\epsilon= E_G \eta(\theta)$, and hence $E_{\hat{G}}
\eta(\theta) \rightarrow E_G \eta(\theta)$.

% if the points $\theta=(\lambda,p)=(0,p)$ are not in the support of $G$, for any $p$.
%In order to have both continuity  boundness, the following are two natural conditions.  a) the support of $G$ satisfy  that for each
%point $(\lambda, p)$ in the support $0<\lambda_0<\lambda$. b)  For each point $(\lambda,p)$ in the support of $G$, $\lambda>0$

%QUESTION: DO WE HAVE EFFICIENCY IN THE POISON CASE?

 We conclude that when confining   to the class of distributions in
$\Gamma= \{ G \mid  G(\{\lambda=0 \})=0 \}$,
we have consistency of the estimator $E_{\hat{G}} \eta(\theta)$ for $E_G \eta(\theta)$ for
any $G \in \Gamma$.
However, the convergence may be arbitrarily slow and depends heavily on the probability concentration of $G$ in the neighborhood of 0.

Consider the following example. $\lambda \sim G_\lambda$, $p=p_0 +\delta 1(\lambda<\lambda_0)$ for some $\lambda_0$. To get a bound on the rate, suppose we know all parameters except for $\delta$ and we are told when $\lambda<\lambda_0$. Then the only sample relevant to the information is the sample of size $O_p\bigl(nG_\lambda(\lambda_0)\bigr)$ coming from $\lambda<\lambda_0$. Given the sample, $\sum X_i 1(\lambda<\lambda_0)$ is binomial, hence we can estimate $\delta$ with accuracy of $\bigl(n\lambda_0G_\lambda(\lambda_0)\bigr)^{-1/2}$. This error has a contribution of
$G_\lambda(\lambda_0)\bigl(n\lambda_0G_\lambda(\lambda_0)\bigr)^{-1/2}$. Since $\lambda_0$ can be arbitrarily small  $G_\lambda(\lambda)$ can converge to 0 as slow as we want, the estimating error of $\delta$ has as slow rate as we want.

\subsubsection{ Inconsistency in Model (i) }

Again, we consider the function $\eta(\theta_i)=p_i$. Now $\theta_i=(\pi_i,p_i)$.

In Model (i) there is no consistency in estimating $G$ for every $G$, neither a consistency in
estimating $E_G(\eta(\theta))$. This is a result of the non-identifiability, and it is demonstrated in
the following example for   $\kappa_i \equiv 1$.

\begin{example} \label{ex:bin}
When $\kappa_i \equiv 1$ there are $M=3$ possible outcomes of the $i'th$ observation, we list them as: $X_i=1$,  $X_i=0$ (while $K_i=1$), and $K_i=0$; the corresponding probabilities are:
$(\pi_i p_i, \pi_i(1-p_i), (1-\pi_i))$.
Suppose the outcomes of $n$ realization have $n_1$ occurrences of $X_i=1$,  $n_2$ occurrences of $X_i=0$, and $n_3$ occurrences of $K_i=0$. Note $(n_1,n_2,n_3)$ is multinomial.
Suppose $\frac{1}{n}(n_1,n_2,n_3)=(0.25,0.25,0.5)$, obviously the following $\hat{G}_1$ and
$\hat{G}_2$
are both GMLE. Let $\hat{G}_1$ be degenerate at $(\pi,p)=(0.5,0.5)$. Let $\hat{G}_2$ be the distribution whose
support is $(0,1), (1,0), (1,1)$ with corresponding probabilities 0.5, 0.25, 0.25.
Then, obviously both $\hat{G}_1$ and $\hat{G}_2$ are GMLE, while $E_{\hat{G}_1} \eta(\theta) =0.5 \neq E_{\hat{G}_2} \eta(\theta) = 0.75.$

\end{example}

More generally, if $\kappa_i\in \sck$, \sck a bounded set, then $(\kappa_i,K_i,X_i )$ may get at most $M$ values, and hence the distribution $G$ is not identified (as long as it is not constraint to have a finite support with cardinality less the above number of possible values). In fact, one can estimate the expectations
\begin{equation}\label{estFun}
g_G(k,x)=E_G \pi^k (1-\pi)^{\kappa_i-k}p^x(1-p)^{k-x},\quad 0\le x\le k\le \kappa_i\in \sck .
\end{equation}
Note that $\eta(\pi,p)\equiv p$ is not the linear span of the functions under the expectation in right hand side of  \eqref{estFun}. Consider now the system of equations:
\eqsplit[estFunPlus]{
    g_{G_0}(k,x)
    &=E_G \pi^k(1-\pi)^{\kappa_i-k}p^x(1-p)^{k-x},\quad 0\le x\le k\le \kappa_i\in\sck
    \\
    \eta&= E_G p
}
The set of equations \eqref{estFunPlus} is a linear system of $M$, say, linearly independent equation in $G$. In fact, fix any $\kappa_i$ and $k$, the span of the subset of functions with $\kappa_i$ and $k$ is $\pi^k(1-\pi)^{\kappa_i-k}p^i$, $i=0,1,\dots,k$. Similarly $\fun{span}\{\pi^k(1-\pi)^{\kappa_i-k}:k=1,\dots,\kappa_i\}= \fun{span}\{\pi^j:j=1,\dots,\kappa_i\}$. Thus,  span of the functions on the top line of \eqref{estFunPlus} is span$\{\pi^jp^i: i\le j\le\max\sck\}$. In particular, $p$ is not in this span, and hence \eqref{estFunPlus} is a set of linearly independent equations.  Suppose, for simplicity, that the true distribution $G_0$ has $M$ support points.  Then, for any $\eta$, \eqref{estFunPlus} has as a  solution with the same support as $G_0$. It is a distribution function, since 1 is in the span of the functions in the right hand side of the top line of \eqref{estFunPlus} and hence $\int dG=\int dG_0=1$. However, it is not necessarily positive. But, by the inverse function theorem, the solution is continuous in $\eta$ and hence for any $\eta$ in some neighborhood of $\eta_0=E_{G_0}p$ there is a positive solution $G$ which is a probability distribution function.

This argument fails for model (ii),  where $\sck=\{0,1,2,\dots\}$, since $p$ is now in the closed linear span of the probability functions. However, this is another explanation for the slow potential convergence. We need to observe the rare large values of $\kappa_i$ to estimate the functional.

%The results in the sequel, give some insight  about the performance of the GMLE estimator
%$E_{\hat{G}} P$ for $E_G P$  in  situations
%with non-identifiability,
%and more generally in non-identifiable mixture problems and GMLE estimators
%$\hat{\eta}=E_{\hat{G}} \eta(\Theta)$ for $E_G \eta(\Theta)$.

%Let $(Y,\Theta) \sim G^*$ where $G^*$ is induced by the mixture $G$.

\subsection{ The GMLE and the mean of the empirical Bayes estimates}
\label{sec:eqre}

%We consider now a general setup with a general observed $Y$ and a general parametric family
%$\{ f(y\mid \theta), \theta \in \Omega \}$.
Given a function $\eta(\theta) $, suppose it is desired to estimate
$\eta_G=E_G \eta(\theta)$. Again, for convenience we consider the notations of random parameters $\theta_1,...,\theta_n$,
but, in light of Section \ref{sec:weak} the results apply also to the setup of fixed $\theta_1,...,\theta_n$.
By the  following theorem, the estimator $\hat{\eta}_G= E_{\hat{G}} \eta( \theta)$ equals to the average of
$E_{\hat{G}}(\eta(\theta) \mid    Y_i)$, $i=1,...,n$. The appeal of this fact is that if we estimate
 the values of the individual $\eta(\theta_i)$ via nonparametric empirical Bayes under squared loss, specifically by
$E_{\hat{G}} (\eta(\theta )\mid  Y_i)$, there is a consistency and agreement between the estimates of the individual parameters and the estimate of their total, or, of their average.
In Zhang (2005), the problem of estimating random sums involving a latent variable is explored. One approach in
Zhang (2005) is to estimate the random sum, by the sum of the estimated  conditional expectations of the summands. This is analogous to estimate  $\sum_i \eta(\theta_i)$ by
$\sum_i E_{\hat{G}} (\eta(\theta) \mid  Y_i)$, the last term equals to $nE_{\hat{G}} \eta(\theta)$ by the following theorem.

\begin{theorem} \label{thm:agreement}
Assume $\eta(\theta)$ is a bounded function, then
\begin{equation}\label{eq:thm:agreement}
\hat{\eta}_G \equiv E_{\hat{G}} \eta(\theta) = \frac{1}{n}\sum _i E_{\hat{G}}(\eta(\theta )\mid  Y_i).
\end{equation}
\end{theorem}

\begin{proof}

%i) Given a realization $Y_1,...,Y_n$, assume that the EM-algorithm converges to a unique maximum $\hat{G}$.
%The $(k+1)'th$ iteration
%is related to the $k'th$ iteration via
%$$d\hat{G}^{k+1}(\theta_0)= \frac{1}{n}\sum_i \frac{ f(Y_i\mid \theta_0)d\hat{G}^{k}(\theta_0)}
%{ \int  f(Y_i\mid \theta)d\hat{G}^{k}(\theta)}.$$
%For $k=\infty$, when the GMLE $\hat{G} \equiv \hat{G}^{\infty}$,  is plugged into the above equality,
%the proof follows when taking the expectation of $\eta(\Theta)$ under both representations, and
%interchanging the order of summation and integration  that correspond to the right hand side.

Let  $$d\hat{G}_t(\theta)= \bigl(1+t(\eta(\theta)-\hat\eta_G)\bigr)d\hat{G}(\theta).$$
Since $\eta(\theta)$ is bounded, it follows that for $\eps>0$ small enough, $\scg=\{ G_t, \; t \in (-\eps,\eps) \}$ is a curve of  cdf's (i.e., $G_t$ is a positive measure with total mass 1).

Since $\hat{G}$ is a GMLE, it maximizes the likelihood within $\scg$. Hence:
\begin{eqnarray*}
0&=& \frac1n\frac{d}{dt} \sum_i  \log (\int f(Y_i\mid \theta) d\hat{G}_t(\theta) ) \mid _{t=0}\\
&=&\frac1n\sum_i \frac { \int \bigl(\eta(\theta)-\hat\eta_G\bigr) f(Y_i\mid \theta) d\hat{G}(\theta) }
{\int f(Y_i\mid \theta) d\hat{G}(\theta)}=\frac1n\sum_i \int \eta(\theta) d\hat{G}(\theta\mid Y_i) - \hat{\eta}_G.
\end{eqnarray*}
This concludes the proof of the theorem.

\end{proof}

Theorem \ref{thm:agreement} represent the GMLE estimator, $\hat\eta_G$ as an average of what may look like of almost \iid random variables, $n^{-1}\sum E_G\bigl(\eta(\theta)\mid Y_i\bigr).$ The latter is  $\sqrt n$ consistent and asymptotically normal. However, this is misleading. All terms in the sum in \eqref{eq:thm:agreement} depend on all other through $\hat G$ and $ E_{\hat{G}}(\eta(\theta )\mid  Y_i)$ is possibly a biased estimator for $ E_{{G}}(\eta(\theta )\mid  Y_i)$. In fact,    as we argue,  $\eta_{\hat G}(\theta)$ may be inconsistent or consistent but converges in a very slow rate. On the other hand, in many other models,  the GMLE is actually consistent, see the discussion in Section 7.8 of Bickel et al. (1995). One such a case  is described in Section \ref{sec:expfam} below. See further discussion there.

\subsubsection  { GMLE for the mean of exponential  mixtures }
\label{sec:expfam}

Let $Y_i \sim N(\theta_i,1), \; i=1,...,n$ be independent observations, the obvious estimator for  $\sum \theta_i$ is $\sum Y_i$.
%For a GMLE $\hat{G}$, let $\Psi_{\hat{G}} (Y)= Y$, then
%the last estimator may be written as $\sum \Psi _{\hat{G}} (Y_i)$. Observe that $E_{\hat{G}} %\Theta= E_{\hat{G} } \Psi _{\hat{G}} (Y)$ as n Theorem \ref{thm:equiv}.
One may wonder about a comparison between the trivial estimator $\sum Y_i$ and the estimator $nE_{\hat{G}} \theta=\sum E_{\hat{G}} (\theta\mid  Y_i)$.
The   nonparametric empirical Bayes $E_{\hat{G}} (\theta\mid  Y_i)$ is strictly better than $Y_i$ as a point-wise estimator of
$\theta_i$. c.f., Brown and Greenshtein (2009), Jiang and Zhang (2009), and Koenker and Mizera  (2014).
Similarly one may wonder about the similar model, estimating $\sum \lambda_i$ when $Y_i \sim Poisson(\lambda_i)$.
In the following we will show that the
two estimators are in fact equal. Thus, GMLE  does not improve over the trivial estimator, yet, it does not  harm.

In the following theorem the notations are for a one dimensional
exponential family, but it applies for a general exponential family.

\begin{theorem} \label{thm:expfam} Let $Y_1,...,Y_n$, be independent observations,
$Y_i \sim f_{\theta_i} (y)$.
Suppose that the density of $Y$ is of the form $f_\theta(y) \equiv f(y\mid \theta)= \exp(\theta y-\psi(\theta))$, $\theta \in \Omega$,
with respect to some dominating measure $\mu$.
Let $\hat{G}$ be a GMLE supported  on the interior  of the parameter set $\Omega$.
Let $\eta(\theta)= E_{\theta} Y$.
%Suppose $\theta \in (a, b)$, where either
%$a=-\infty$ or $b=\infty$. Let $\eta(\theta)= E_\theta Y$.
Then: $$nE_{\hat{G}} \eta(\theta)= \sum Y_i.$$
\end{theorem}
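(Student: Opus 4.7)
The key observation is that for an exponential family the target function is itself the natural mean, $\eta(\theta) = E_\theta Y = \psi'(\theta)$. My plan is to exploit this by perturbing $\hat{G}$ not multiplicatively (as in the proof of Theorem~\ref{thm:agreement}) but via a \emph{translation} in the parameter space, and then invoke the first-order optimality of the GMLE along this new one-parameter family of mixing distributions.

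Concretely, for small real $t$ let $\hat{G}_t$ denote the pushforward of $\hat{G}$ under the map $\theta \mapsto \theta + t$. Since any GMLE may be taken with finite support and the support of $\hat{G}$ lies in the open set $\Omega$, there is $\epsilon_0>0$ so that $\hat{G}_t$ is a legitimate mixing distribution on $\Omega$ for all $|t|<\epsilon_0$. Consequently the sample log-likelihood along the curve $\{\hat{G}_t\}$ is maximized at $t=0$, giving
\[
\frac{d}{dt}\Big|_{t=0} \sum_{i=1}^n \log \int \exp\bigl((\theta+t)Y_i - \psi(\theta+t)\bigr)\, d\hat{G}(\theta) = 0.
\]

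Differentiating inside the integral (justified by smoothness of exponential families on the interior of $\Omega$ together with the finite support of $\hat{G}$) yields
\[
\sum_{i=1}^n \frac{\int (Y_i - \psi'(\theta))\, e^{\theta Y_i - \psi(\theta)}\, d\hat{G}(\theta)}{\int e^{\theta Y_i - \psi(\theta)}\, d\hat{G}(\theta)} = \sum_{i=1}^n \bigl(Y_i - E_{\hat{G}}(\eta(\theta)\mid Y_i)\bigr) = 0.
\]
Applying Theorem~\ref{thm:agreement} to $\eta=\psi'$ (which is bounded on the finite support of $\hat{G}$) rewrites $\sum_i E_{\hat{G}}(\eta(\theta)\mid Y_i)$ as $n E_{\hat{G}}\eta(\theta)$, and rearranging gives the claim $\sum_i Y_i = n E_{\hat{G}}\eta(\theta)$.

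The main delicacy is verifying that the translated measures $\hat{G}_t$ remain admissible competitors to $\hat{G}$ in the likelihood, and that differentiation may be passed under the integral; both points reduce to the finite-support-in-the-open-set hypothesis and standard exponential-family dominated-convergence bounds. For a general $k$-dimensional exponential family the identical argument is run componentwise in the translation parameter $t\in\mathbb{R}^k$, yielding the vector identity $n E_{\hat{G}}\eta(\theta)=\sum_i Y_i$ in each coordinate of the sufficient statistic.
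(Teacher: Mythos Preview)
Your proof is correct and follows essentially the same route as the paper: perturb $\hat{G}$ by a translation $\theta\mapsto\theta+t$, use the first-order stationarity of the log-likelihood at $t=0$ to obtain $\sum_i\bigl(Y_i-E_{\hat{G}}(\psi'(\theta)\mid Y_i)\bigr)=0$, and then invoke Theorem~\ref{thm:agreement}. Your version is slightly more careful in two places the paper leaves implicit---the use of finite support of $\hat{G}$ to ensure the translated measures stay in $\Omega$ and to guarantee that $\eta=\psi'$ is bounded on $\operatorname{supp}\hat{G}$ so that Theorem~\ref{thm:agreement} applies---but the underlying idea is the same.
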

\begin{proof}
Given the observations $Y_1,...,Y_n$, let $\hat{G}$ be a GMLE. Define
the translation $\hat{G}_\Delta(\; . \;)=\hat{G}(\; .  - \Delta)$.

Note: $$ \int \exp(\theta y -\psi(\theta)) d \hat{G}_\Delta(\theta)=
 \int \exp((\theta-\Delta) y -\psi(\theta-\Delta)) d \hat{G}(\theta).$$

Since $\hat{G}$ is GMLE,
\begin{eqnarray*}
0&=& \frac{d}{d\Delta} \sum \log(\int \exp(\theta Y_i -\psi(\theta))
d \hat{G}_\Delta(\theta) ) \mid _{\Delta=0} \\
&=&  \frac{d}{d\Delta}
\sum \log ( \int \exp((\theta-\Delta) Y_i -\psi(\theta-\Delta) ) d \hat{G}(\theta) ) |_{\Delta=0}\\
&=& \sum  \frac{  \int (-Y_i + \psi'(\theta) )  \exp(\theta Y_i -\psi(\theta)) d \hat{G}(\theta) }
{\int \exp(\theta Y_i -\psi(\theta)) d \hat{G}(\theta)}\\
&=& -\sum Y_i + \sum E_{\hat{G}} (\psi'(\theta) \mid  Y_i)\\
&=& -\sum Y_i + nE_{\hat{G}} \eta(\theta)
\end{eqnarray*}

The last equality follows by Theorem  \ref{thm:agreement},
and since  for exponential family
$\eta(\theta)=\psi'(\theta)=E_\theta Y$.

\end{proof}

The above theorem implies that under an exponential family setup,
$E_{\hat{G}} \eta(\theta)$ is unique  for $\eta(\theta)=E_\theta Y$ even if $\hat{G}$ is not. See the following example.
\begin{example}
Let  $Y_i \sim Bernoulli(p_i)$ be independent where $p_i \sim G$  are independent, $i=1,...,n$.
Suppose $Y_1=...=Y_{\frac{n}{2}}=0$ and  $Y_{\frac{n}{2}+1}=...=Y_n=1$.
Then, obviously both $\hat{G}_1$ and $\hat{G}_2$ are GMLE, where $\hat{G}_1$ has half of its mass
at $p=0.1$ and the other half at $p=0.9$, while $\hat{G}_2$ is degenerate at
$p=0.5$. Obviously $\hat{G}_1 \neq \hat{G}_2$,   however, $E_{\hat{G}_1} p= E_{\hat{G}_2} p=0.5$.
\end{example}

In the normal model, $Y_i \sim N(\theta_i,1)$ all finite cumulants  of $\theta$ can be estimated in the $\sqrt n$ rate by the corresponding cumulants  of $Y$. For example,
estimate $E_G\theta$ by $n^{-1}\sum Y_i$. That these estimators are equivalent to the GMLE can be argued by a similar argument as in the proof of Theorem \ref{thm:expfam} by taking further derivatives. However, other functions cannot be estimated in such a rate, e.g., the deconvolution problem of estimating $G(\theta_0)=E_G1(\theta\le\theta_0)$ has a logarithmic rate see Fan (1991). Even a very smooth function like $\eta(\theta)=e^{-\alpha^2\theta^2/2}$, $\alpha\ge 1$, cannot be estimated in the parametric rate, see Donoho and Low (1992) . However, estimating functions of finite cumulants is efficient, as the tangent space is saturated, and any mean is an efficient estimate of its expectations. An example of a less trivial function that can be estimated efficiently at the $\sqrt n$  is the density at 0 of $G*N(0,\alpha^{-2})$, $\alpha^2<1$, which in our framework is $E_G\eta(\theta)$, where $\eta(\theta)=\sqrt{\alpha^2/2\pi}e^{-\alpha^2 \theta^2/2}$. This is also, the density at 0 of $f_G*N\bigr(0,(1-\alpha^2)/\alpha^2\bigr)$, and hence can be estimated with the unbiased kernel estimator:  $n^{-1}\sqrt{(1-\alpha^2)/2\pi\alpha^2}\sum e^{-\alpha^2Y_i/2(1-\alpha^2)}$. See Bickel et al. Section 4.5 for the tangent space and for the argument that it is efficient.

\subsection{ The compound decision model and weak convergence} \label{sec:weak}

The seminal paper of Kiefer and Wolfowitz (1956), appeals to  a completely unknown distribution $G$, assuming that
$\theta_i \sim G$, $i=1,2,...$ are iid.  They prove weak convergence of $\hat{G}$ to $G$.
It may be seen in the introduction of that paper,
that the  authors are somewhat uncomfortable with this (very weak) assumption.
The weak convergence in KW (1956) is explored from a broader perspective in Chen, J. H. (2017).

In this subsection we  attempt to present a weak convergence result when $\theta_1,...,\theta_n$ are considered as fixed unknown parameters that may depend on $n$. That is, we consider a general triangular array  where at stage $n$ the parameters are some $(\theta_1^n,...,\theta_n^n)$, as also elaborated in the sequel.

Recently there are results about rates of convergence of $\hat{G}$ and other estimators to $G$. See, e.g.,
Saha and  Guntuboyina (2020) for the multivariate Gaussian case,
Philippe and Kahn (2018) for finite mixtures. Those papers do not cover the
weak convergence for triangular arrays
presented in this subsection. In  Philippe and Kahn (2018)  a non-GMLE estimator, $\tilde{G}$ for $G$, is studied and they provide minmax rates for
the Wasserstein distance between $\tilde{G}$ and $G$, for mixtures with $m$ components. However, those rates do not imply our weak convergence  result in the triangular array formulation.
Specifically, in our triangular array setup, as the number of observations $n$ increases,
the number of components $m$ in the mixture is also increased since we allow  $m=n$.

Given any triangular array sequence of sets $\{\theta_1^n,...,\theta_n^n \} $, $n=1,2,...$, denote by $G^n$ its corresponding empirical distribution. Given independent observations $Y_1^n,..., Y_n^n$, $Y_i^n \sim f(y\mid \theta_i^n)$, we denote the corresponding GMLE, as defined above, by $\hat{G}^n$. That is, $\hat G^n$ is the GMLE ignoring the fact that the observations are not \iid from the mixture. In the sequel we may omit the superscript $n$ and write $\theta_i$,  $Y_i$.

Note that, in our current setup the true joint likelihood $f_G^n(Y_1,...,Y_n)$
does {\it not} satisfy $f_G^n(Y_1,...,Y_n)= \Pi_i f_G(Y_i)$, since $Y_i$ are {\it not} independent.

\begin{theorem} \label{thm:weak} Assume A1-A5 below, then   for any bounded and continuous function $f$, $\int f d\hat{G}^n- \int f d G^n\to_{a.s.} 0$.

%the sequence of non-positive measures $(G^n-\hat{G}^n)$ converges weakly
%to the zero-measure.

\end{theorem}

{\bf Assumptions}
\begin{description}

\item{A1} The parameter space $\Omega$ is compact.

\begin{comment}
{ \bf Reply to Comment 6 of the referee: The above compactness assumption is a convenient way to achieve  tightness
of the set $\{ G\}$ of all distributions $G$ defined on $\Omega$. The tightness implies compactness of $\{ G \}$
under the weak convergence  topology. Yet, compactness of $\Omega$  excludes, e.g., the classical  example $f(y\mid \theta)=N(\theta,1) \; \theta \in(-\infty, \infty)$. If we want to allow $\Omega=(-\infty, \infty)$, tightness may be implied by suitable extra assumptions
on the tails of the distributions $G \in \{ G \}$. }
\end{comment}
Let $$h(y)=\sup_\theta |\log(f(y\mid \theta))|, $$
%$$ \tilde{h}(y)= \sup_{\theta, \tilde{\theta}} \frac{f(y\mid \theta)}{f(y\mid \tilde{\theta})}.$$

\item{A2}   $\sup_{\theta \in \Omega} E_\theta h^4(Y)<\infty$.

 %$\sup_\theta E_\theta h(Y) \tilde{h}(Y) <\infty$.

%By A2 for every $\epsilon>0$, there exists $M_\epsilon$ and a corresponding set $D=D_\epsilon$,
%$$D_\epsilon=\{y \mid  h(y)<M_\epsilon \},$$ such that:
%\begin{equation} \label{eq:un} \sup_\theta E_\theta ( h(Y) \times I(Y \notin D_\epsilon) ) <\epsilon. \end{equation}
%with a complementary
%denoted $D_\epsilon$
%such that:

%i) $$ \sup_\theta E_\theta h(Y)\times I(Y \notin D_\epsilon)<\epsilon.$$
%Here $I$ is an indicator function.

%ii) $f(y\mid \theta)>\epsilon, \; \forall\; y \in D_\epsilon$.

\item{A3}
%i) The functions $f(y\mid \theta_0) \equiv f_{\theta_0}(y)$,  are uniformly continuous
%in the range $y \in D_\epsilon$, for all $\theta_0 \in \Omega$.
The functions $f(y_0\mid \theta)$
are uniformly bounded and continuous in $\theta$  for every $y_0$.

%A2) i) The densities $\{ f(y\mid \theta) \} $ are continuous  with respect to $y$ and $\theta$.

%ii) For every $\epsilon>0$  there exist $M$ such that,
%\begin{equation} |E_\theta \log(f(Y|\theta) - E_\theta  \log(f(Y|\theta) \times  I(| \log(f(Y\mid \theta)|<M ) \; |<\epsilon, \end{equation} for every $\theta \in \Omega$. Here $I$ is an indicator function.

\item{A4} The class of densities $\{ f_G(y) \}$, that correspond to the set of all possible mixtures $\{G\}$,
is identifiable.

%In particular we denote   $\log((f_\theta^{(M)}(Y))$ the trunaction when $G$ is degenerated
%at $\theta$, $\theta \in \Omega$.

\item{A5} For every $G_0 \in \{G\}$, every $\epsilon>0$, and every $M>0$, the set
$$ \{ G \; \vert \; \Vert \log(f_G^{(M)})-\log(f_{G_0}^{(M)})\Vert_\infty <\epsilon\},$$
is an open set under the weak convergence topology, where $f_G^{(M)}(Y)=[f_G(Y)]_{M}$ and $[x]_M=\max(-M,\min(X,M)).$
\end{description}

Condition A1 is used to avoid situations where mass can escape. The theorem fails when $(\theta_1,\dots,\theta_n)=(n,n+1/n,\dots,n+(n-1)/n)$ since we can consider a bounded and continuous $f$ which is not uniform continuous, for example, $f(\theta)=\sin(\theta^4)$. This may be considered as a technical issue, as the GMLE is translation equivariant, and in this case we do have convergence to uniform, as could be obtained by restricting to a smaller set of test functions, e.g., to bounded functions with a bounded first derivative. More problematic example is with the parameters at sample size $n$ being $(1,2,\dots,n)$. In this case the convergence fails even when $f(\theta)=e^{-\theta^2}$. Assumption A1, however, can be replaced by a weaker one that ensures compactness of the measures such as strong moments conditions. We preferred to keep it simple, but strong enough for our models (although it does not cover, for example, a Gaussian mixing distribution).

The purpose of A5 is to ensure, that given an open cover for $\{ G\}$, a finite covering  subset may be extracted by compactness. The assumption is implied  by A1--A4 if, in addition, it is assumed that for every $M>0$, the functions $\log((f_\theta^{(M)}(y)) $, $\theta \in \Omega$ are  uniformly continuous.

\begin{proof}

By A1 we trivially have tightness, and hence any sequence $G^n$ of measures,
has a weakly converging sub-sequence. Our plan is to show that
every sub-sequence  of  $a_n\equiv \int f d\hat{G}^n- \int f d G^n$ has a further sub-sequence that converges to zero w.p.1.
%every {\it weakly converging}
%sub-sequence of   $(G^n-\hat{G}^n)$,   converges weakly to the zero-measure.
The later implies the assertion
of the theorem.

Given any sub-sequence of $a_{n}$, take a  corresponding further sub-sequence $n_j$,
so
that  $G^{n_j} \Rightarrow_w G^0$ and $\hat{G}^{n_j} \Rightarrow_w  \hat{G}^0$, for some
$\hat{G}^0$ and $G^0$.

Denote by $G^n_1$ the joint distribution
of $\theta_1,...,\theta_n$
 when the parameters are sampled
independently from $G^n$, as if the parameters are sampled with replacement from the set $\{\theta_1,...,\theta_n\}$. We denote by $G^n_2$ the joint distribution of $\theta_1,...,\theta_n$
when the sampling is done without replacement.
Note that, for any function $\psi$,  \begin{equation} \label{eq:eq} E_{G_1^n} \sum_i \psi(Y_i)= E_{G_2^n} \sum_i \psi(Y_i). \end{equation}

%this follows since that the expected value under sampling with or without replacement is the same.  Moreover,
%\begin{equation} \label{eq:eq1} Var_{G^n_2}\sum \psi(Y_i) \leq   Var_{G^n_1}\sum \psi(Y_i), \end{equation}
%this follows again by considering sampling with versus without replacement.

We keep the notations that for any function $\psi$,   $E_G \psi(Y)$ is the expectation when $\theta \sim G$ and  $Y\mid\theta \sim f(y \mid  \theta)$.

%In the sequel we utilize the fact that uniformity in A1-A4 holds, also for all mixtures and not just uniformly for  $\theta \in \Omega$. For example the following
%By  A1):  for every pair $G, G'$,
%$ \sup_{G,G'} |E_G \log(f_{G'}(Y)) \times I(Y \in D_\epsilon) - E_G \log(f_{G'}(Y))|
%<\epsilon.$

\begin{comment}
Since $\hat{G}^k \Rightarrow_w \hat{G}^0$
and $G^k \Rightarrow_w {G}^0$,
by A3 ii), for every $y \in D_\epsilon$,
 $f_{\hat{G}^k}(y) \rightarrow f_{\hat{G}^0}(y)$.
By taking a  {\it finite}  grid of points in $D_\epsilon$, the convergence is
obviously uniform on the finite grid.
When the grid is dense enough,
the uniform continuity  assumption A3 i) implies  that for every $\epsilon>0$,  for large enough
$k$
\begin{equation} \sup_{y \in D_\epsilon}|f_{\hat{G}^k}(y) - f_{\hat{G}^0}(y)| <\epsilon. \end{equation}
Similarly,
\begin{equation} \sup_{y \in D_\epsilon} |f_{{G}^k}(y) - f_{{G}^0}(y)| <\epsilon, \end{equation}
for every $\epsilon>0$, for large enough $k$.
By the definition of $D_\epsilon$ and uniform continuity of the $\log$ function on a domain that is bounded away from 0,
the last two equations imply that for large enough $k$:

\begin{equation} \label{eq:apply} \sup_{y \in D_\epsilon}|\log(f_{\hat{G}^k}(y) ) - \log( f_{\hat{G}^0}(y))| <\epsilon. \end{equation}
and
\begin{equation} \sup_{y \in D_\epsilon} |\log(f_{{G}^k}(y) ) - \log( f_{{G}^0}(y))| <\epsilon. \end{equation}

\end{comment}

By Lemma \ref{lem:lem} in the Appendix,
under both sequences of joint measures $G^{n_j}_1$ and $G^{n_j}_2$,
\begin{equation} \label{eq:main}
\frac{1}{n_j} \sum_{i=1}^{n_j}  \log (f_{\hat{G}^{n_j}} (Y_i))  \rightarrow_{a.s.} E_{G^0}  \log (f_{\hat{G}^0}(Y)).
\end{equation}
\noindent Similarly,    under both sequences of joint measures $G^{n_j}_2$, and $G^{n_j}_1$
\begin{equation}
\frac{1}{n_j} \sum_{i=1}^{n_j}  \log ( f_{{G}^{n_j}} (Y_i) ) \rightarrow_{a.s.}  E_{G^0}   \log(f_{ G^0} (Y)).
\end{equation}

By definition of $\hat{G}^{n_j}$ as GMLE, $ \frac{1}{n_j} \sum_i \log(f_{\hat{G}^{n_j}} (Y_i)) \geq
 \frac{1}{n_j}  \sum_i \log( f_{ {G}^{n_j} }(Y_i) )$,
thus under the sequence of measures $G^{n_j}_2$:
\begin{equation}
\begin{split}
E_{G^0}  \log (f_{\hat{G}^0}(Y)) &=  \lim  \frac{1}{n_j} \sum_{i=1}^{n_j} \log (f_{\hat{G}^{n_j}} (Y_i))
\\
&\geq \lim \frac{1}{n_j}\sum_{i=1}^{n_j}  \log (f_{G^{n_j}}(Y_i))
\\
&= E_{G^0}   \log(f_{ G^0} (Y)) ,
\end{split}
\end{equation}

Since $f_{G^0}=\argmax_{f_G } E_{G ^0} \log( f_G(Y)) $,
we obtain that:
\begin{equation}
E_{G^0} \log (f_{\hat{G}^0} (Y)) \leq E_{G^0} \log (f_{{G}^0}(Y)).
\end{equation}

From the last two inequalities
%and since by (\ref{eq:eq}) and  the above $E_{G^0_1}(\; . \; )=E_{G^0_2}(\; . \;)$,
we obtain:
 $$E_{G^0} \log (f_{G^0} (Y)) = E_{G^0} \log (f_{\hat{G}^0}(Y)).$$

By the identifiability assumption A4, and by concavity of the $\log$ function,
$\argmax_{\{ f_G \}}  E_{G^0} \log (f_G(Y)) $   is unique, and  $\hat{G}^0=G^0$.

This concludes our proof.

\end{proof}

{\bf Remark}.  In {\it compound decision}, where $\theta_1,...,\theta_n$ are fixed, it is desired to estimate $\eta_i=\eta(\theta_i)$.
 Then, the goal  is to approximate the optimal separable estimator, some times also  termed
optimal simple symmetric  estimator. Under a squared loss, the later estimator for $\eta_i$, given an observation $Y_i$, is
$ E_{G^n}(\eta(\theta_i)\mid Y_i)$, see, e.g., Zhang (1997), Brown and Greenshtein (2009).
In our triangular array setup, our weak convergence result, implies  under suitable conditions
 $[E_{G^n}(\eta(\theta_i)\mid Y_i)- E_{\hat{G}^n}(\eta(\theta_i)\mid Y_i)] \rightarrow 0$. A simple such possible additional condition in our triangular array setup, is
that $\liminf f_{G^n}(Y_i^n)> 0$.

\section{Simulations.}

In this section,
we simulate the estimation of $\eta_G=E_G \eta(\theta)$, where $\eta(\theta_i)=p_i$, under both models (i) and (ii).
In all of the following simulations, for convenience,
$\eta_G$ equals 0.5.
For any parameter configuration, the number of
repetitions is 50.

We compute the GMLE $\hat{G}$ via EM algorithm on a
grid.
As suggested by Koenker and Mizera(2014), we search for a ``confined GMLE''
where we confine our search to distributions with
a finite and specific support.
%We also take their recommendation to confine ourselves to  distributions on a specific (dense enough)
%support grid.
Our choice of a specific support is ad-hoc, see some rigorous treatment in Dicker and
Zhao (2014). The search for GMLE among distributions  on the specific grid via EM-algorithm,
is also the approach in
Feng and Dicker (2018), see further elaboration there.

The grids for $(\theta_{i1},\theta_{i2})$ contain
$40 \times 40=1600$ equally spaced grid points in a range that fits the relevant problem.
The parametrization in Model (ii) is via a two dimensional Poisson, as explained in
Section \ref{sec:II}. The EM algorithm started with $G$ uniform on the grid and ran for 1000 iterations.

\subsection{ Poisson sample sizes.}

Table \ref{tab:discrete} presents the results of Model (ii) with 2-points support distribution $G$.  There are 500 strata corresponding to each of the two points in the support of $G$.
The table presents  the mean and the sample standard deviation of the naive  and the GMLE
estimators for $\eta_G$.

\begin{table}
\caption{Poisson Simulation discrete $G$. The estimated mean and standard deviations of two estimators. }
\label{tab:discrete}
\begin{center}
\begin{tabular}{ |c||c|c| }
 \hline
    Support points of $(\lambda,p))$ & The naive estimator & The GMLE \\
	\hline \hline	
 $(2,0.4),(1,0.6)$ & {\bf 0.486}, (0.014) & {\bf 0.503}, (0.020) \\
 $(2,0.2),(1,0.8)$  & {\bf 0.453}, (0.015) & {\bf 0.496}, (0.018) \\
 $(2,0.2),(0.5,0.8)$ & {\bf 0.385}, (0.013) & {\bf 0.505}, (0.022) \\
 \hline
\end{tabular}
\end{center}
\end{table}

%We also simulated more complicated scenarios, where the support of $G$
%is continuous.  No, additional remarkable insights were discovered.
In Table \ref{tab:cont} we report on one such set of simulations.
In the three cases presented in Table \ref{tab:cont} there are again 500 strata  of two types. The probability $p^I$ of first 500 strata
 is fixed while for the rest of the strata the probability is $p^{II}=(1-p^I)$. The Poisson parameter $\lambda$ is continuous in all the three cases and is chosen from a uniform distribution, $U(0.5,1)$ in the first 500 strata and from $U(0.5,2)$ in each of the remaining 500 strata.

\begin{table}
\caption{Poisson simulation with continuous $G$. The distribution of $\lambda$ is
$U(0.5,1)$, for the first 500 strata, and  $U(0.5,2)$ for the remaining strata. The binary probabilities $p^I$ corresponding to the first  500 strata are given the table. The binary probabilities of in the rest of the strata are $1-p^I$. The table gives estimates of the mean and standard deviation of the two estimators. }
\label{tab:cont}
\begin{center}
\begin{tabular}{ |c||c|c| }
 \hline
     $p^I$& Naive & GMLE \\
\hline \hline	
 $0.4$ & {\bf 0.513}, (0.015) & {\bf 0.500}, (0.023) \\
 $0.3$ & {\bf 0.529}, (0.017) & {\bf 0.501}, (0.027) \\
 $0.2$ & {\bf 0.538}, (0.016) & {\bf 0.491}, (0.027) \\
 \hline
\end{tabular}
\end{center}

\end{table}

\subsection{ Binomial sample sizes.}

We next study Model (i), where $K_i$, the realized sample size from stratum $i$, is
distributed $B(\kappa_i,\pi_i)$. Again, our simulated populations have two types of strata,
500 of each type. In the simulations reported in Table \ref{tab:Binom1}, $\kappa_i=4$ for  all of the 1000 strata, and $\eta_G=0.5$ throughout the three simulations summarized in the table.  In 500 strata  $\pi_i=p_i=0.5-\del$ while in the other 500 strata, $\pi_i=p_i=0.5+\del$.

\begin{table}
\caption{The mean and standard deviation of two estimators. Binomial Simulation. $\kappa \equiv 4$ and $\pi_i=p_i=0.5\pm \del$. }\label{tab2}
\label{tab:Binom1}
\begin{center}
\begin{tabular}{ |c||c|c| }
 \hline
    $\del$ & Naive & GMLE \\
	\hline \hline	
 $0.3$ & {\bf 0.559}, (0.012) & {\bf 0.502}, (0.014) \\
 $0.2$ & {\bf 0.522}, (0.011) & {\bf 0.504}, (0.012) \\
 $0.1$ & {\bf 0.504}, (0.010) & {\bf 0.501}, (0.010) \\
 \hline
\end{tabular}
\end{center}
\end{table}

The final reported simulations are summarized in Table \ref{tab:Binomial2}.
We study Binomial sampling with various values of $\kappa$, fixed at $\kappa=1,\dots,5$. Again, there are
two types of strata, 500 strata for each of
the two types; for 500 strata $p_i$ and $\pi_i$ are sampled independently from  $ U(0.1,0.6)$, while for the rest of the strata they are \iid from $U(0.4,0.9)$.

\begin{table}
\caption{Binomial Simulations with continuous $G$. $\kappa$=1,2,3,4,5.}\label{tab3}
\label{tab:Binomial2}
\begin{center}
\begin{tabular}{ |c||c|c| }
 \hline
    $\kappa$ & Naive & GMLE \\
	\hline \hline	
 $1$ & {\bf 0.544}, (0.019) & {\bf 0.530}, (0.015) \\
 $2$ & {\bf 0.528}, (0.014) & {\bf 0.502}, (0.021) \\
 $3$ & {\bf 0.522}, (0.014) & {\bf 0.498}, (0.022) \\
 $4$ & {\bf 0.517}, (0.012) & {\bf 0.499}, (0.020) \\
 $5$ & {\bf 0.512}, (0.009) & {\bf 0.501}, (0.013) \\
 \hline
\end{tabular}
\end{center}
\end{table}

It is surprising how well the GMLE is doing
already for $\kappa=2,3$, in spite of the
non-identifiability of $G$ and the
inconsistency of the GMLE.
%See also, Example \ref{ex:bin}.

\begin{section} {Confidence Interval for $\eta_G$}
\label{sec:CI}
Although the GMLE $\hat{\eta}_G$ is an appealing estimator, we do not know how good is its performance,
beyond the consistency results we established. This is especially under non-identifiability where consistency is not implied.
In the following we suggest an asymptotically level-$(1-\alpha)$ conservative confidence interval  for $\eta_G=E_G \eta(\theta)$.
We only elaborate on defining the corresponding convex optimization problem. Implementing and solving the sketched
convex optimization problem, could be challenging.

Let $Z=Z(Y)$ be a random variable with $M$ possible outcomes, $z_j, \; j=1,...,M$. One may think of $M$ ``chosen cells'' in a goodness of fit test.
The considerations for the choice and for the number of cells is beyond the scope of this section, in particular $M(n) \equiv M$ is fixed. {
%reply rcomment 4,5:
Then the densities $f_Y(y\mid \theta)$ induce densities $f_Z(z\mid \theta)$, $\theta \in \Omega$.} Denote
$$p_G^j=P_G(Z=z_j)= \int f_Z(z\mid \vartheta)dG(\vartheta);$$
denote $\hat{p}^j= \frac  { \{\# i \mid  Z_i=z_j\}}{n} \equiv \frac{n_j}{n}, \; j=1,...,M$, where $n_j$ is implicitly defined.
Recall that:
$$ 2( \sum_j n_j \log(\hat{p}^j)   - \sum_j  n_j \log(p_G^j ) ) \Rightarrow_G \chi^2_{(M-1)}.$$
Let $\chi^{2}_{(M-1), 1-\alpha}$, be the $(1-\alpha)$ quantile of a $\chi^2_{(M-1)}$ distribution.
Define  $$\Gamma_\alpha = \{ G \; : \;   \sum_j n_j \log({p}_G^j )\geq \sum_j n_j log(\hat{p}^j) - \frac{1}{2}\chi^{2}_{(M-1), 1-\alpha} \}.$$
Observe that $\Gamma_\alpha$ is a convex set of distributions.

The functional $\eta(G)= E_G \eta(\theta)$, is  a linear functional.
%Note, this is the case in all the examples presented in Section \ref{sec:Ex}.

Let $$\eta^L=\min_{G \in \Gamma_\alpha} \eta(G); \; \eta^U=  \max_{G \in \Gamma_\alpha}
\eta(G).$$

By the above, finding $(\eta^L,\eta^U)$ is a convex problem, and in addition the later interval is a $(1-\alpha)$
conservative confidence interval for $E_G \eta(\theta)$.

{
%\bf Reply to comment 4:
The above set $\Gamma_\alpha$ is in the spirit of the F-localiztion in Ignatiadis and Wager (2021).
While our suggestion for the choice of $Z(Y)$ is ad-hoc, as often done in goodness of fit tests, they  carefully elaborate
on optimal choices of `affine estimators'. }

It may be shown that the above is also a conservative $(1-\alpha)$-level CI
for $\sum \eta(\theta_i)$ for fixed $\theta_1,...,\theta_n$, and also a $(1-\alpha)$-level
conservative credible set for $\sum \eta(\theta_i)$  when
$\theta_1,...,\theta_n$,  are independent realizations  $\theta_i \sim G$.

\end{section}

\section{   Real Data Example.}

Our example is based on data from the Social-Survey conducted
yearly by  Israel Census Bureau (ICB).
A random sample representing a $1/1000$  fraction of the age 20 or older
individuals in the registry is drawn. The  home
addresses of those in the sampled are verified and they are interviewed  in person.

We study here the  data  accumulated for
Tel-Aviv, in the surveys  collected  during
2015--2017. The total sample size  in those
three years is 1256.
There are 156 `statistical-areas' in Tel-Aviv, very
roughly of equal size, about 3000 individuals in each.
This means that around three individuals are sample from each statistical area.
Statistical-areas are considered homogeneous in many
respects, and we take them as our strata.

Let $K_i$ be the  sample size in stratum $i$, $i=1,...156$,
then our data satisfy $K_i>0$, $i=1,...,156$.
(Admittedly, we neglected a few small statistical-areas that actually
had zero sample sizes).
In our analysis, $p_i$ is the proportion of individuals in stratum $i$
that own their living place (or, it is owned by a member of their household).
The goal is to estimate $n^{-1} \sum p_i$, roughly the
proportion of individuals that own their living-place.

The naive estimator is applicable  since $K_i>0$, $i=1,...,156$. The estimated proportion   obtained by the naive estimator is:
$$\frac{1}{n} \sum_{i=1}^n  \frac{X_i}{K_i}=
{\bf  0.434}.$$
On the other hand, the `extreme collapse'
estimator, satisfy: $$\frac{\sum X_i}{\sum K_i}={\bf 0.488}.$$
The significant difference between the two estimates
has to do   {\it also} with the fact that strata are, in fact, not of
equal size. But, more  importantly for us, the
`extreme collapse'  seems  to over estimate the
proportion, since owners are over represented in
the sample. One reason is that their address in
the registry is more accurate and thus it is easier to
find them. In other words, individuals are not MCAR (missing completely at random).
This phenomena is partially corrected by the
stratification, when MAR (missing at random) conditional on the fine strata
is approximately right.

%({\color{red}I FAIL TO UNDERSTAND WHY. WOULDN'T IT BE STILL TRUE THAT HOME OWNERS IN THE %SAME BAIT MESHOTAF ARE EASIER TO FIND? WHY WOULD IT MAR IF IT IS NOT MCAR? })

In Table \ref{tab4} we compare the GMLE and the naive estimators in  semi-real simulated  scenarios in which  only
a randomly sub-sampled with probability  $\gamma$, of the described sample, is retained. The average estimates  based on 25 simulations applied on
the real data  with $\gamma=0.1, 0.2, 0.25$, are presented
in Table 5.  The (random) number of
simulated strata with zero sample sizes, corresponding
to $\gamma=0.1, 0.2, 0.25$,  are around 70, 40, and 30,
correspondingly.

It is reasonable to assume that the number of observations in the strata are  Poisson random variables. Thus, the
sample  sizes $K_i$ in the simulated sub-sample are   ${\mbox Poisson}(\lambda_i)$ too, $i=1,...,156$.
The naive estimate based on  the entire data equals {\bf 0.434},
which is a reasonable benchmark, since that based on the entire data we have no empty strata.
%Note that by Corollary \ref{cor:eta}  the naive estimator and the GMLE are equal,
%basd on the entire data, since $K_i>0$, $i=1,...,n$.

 In the semi-real simulations the GMLE seems to perform much better
than the naive estimator.

\begin{table}
\caption{ Home ownership in Tel-Aviv. Performance of the two estimators as function of sampling rate. }\label{tab4}
\begin{center}
\begin{tabular}{ |c||c|c| }
 \hline
    $\gamma$ & Naive & GMLE \\
	\hline \hline	
 $0.1$ & 0.471 & 0.457 \\
 $0.2$  & 0.467 & 0.443 \\
 $0.25$ & 0.447 & 0.434 \\
 \hline
\end{tabular}
\end{center}
\end{table}

\section{Appendix}

\begin{lemma} \label{lem:lem}
Assume the setup of Section \ref{sec:weak}, Assumptions  A1--A5, and the notion of the proof of Theorem \ref{thm:weak}. Then, under both $G^{n_j}_1$ and $G^{n_j}_2$,

i) \begin{equation} \label{eq:main}
\frac{1}{n_j} \sum_{i=1}^{n_j}  \log (f_{\hat{G}^{n_j}} (Y_i))  \rightarrow_{a.s.} E_{G^0}  \log (f_{\hat{G}^0}(Y)).
\end{equation}

ii)
\begin{equation}
\frac{1}{n_j} \sum_{i=1}^{n_j}  \log ( f_{{G}^{n_j}} (Y_i) ) \rightarrow_{a.s.}  E_{G^0}   \log(f_{ G^0} (Y)).
\end{equation}
\end{lemma}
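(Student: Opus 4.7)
The plan is to establish a uniform strong law of large numbers
\[
\sup_{G}\Big|\frac{1}{n_j}\sum_{i=1}^{n_j}\log f_G(Y_i)-E_{G^{n_j}}\log f_G(Y)\Big|\rightarrow_{a.s.} 0,
\]
valid under both $G_1^{n_j}$ and $G_2^{n_j}$, and then specialize to $G=\hat G^{n_j}$ for (i) and $G=G^{n_j}$ for (ii). Combined with the weak convergences $\hat G^{n_j}\Rightarrow_w\hat G^0$, $G^{n_j}\Rightarrow_w G^0$ and the continuity in $G$ of the $M$-truncated log-density granted by A5, this yields both asserted limits once one also shows that $E_{G^{n_j}}\log f_{\hat G^{n_j}}(Y)\to E_{G^0}\log f_{\hat G^0}(Y)$ (and analogously for $G^{n_j}$).

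For the uniform SLLN, I would first truncate at level $M$. Using the envelope $|\log f_G(Y)|\le h(Y)$ and the uniformly bounded fourth moment of $h(Y)$ from A2, a Markov/Borel--Cantelli argument shows that, uniformly in $G$ and a.s., $n_j^{-1}\sum_i|\log f_G(Y_i)|\,I(|\log f_G(Y_i)|\ge M)$ is arbitrarily small for $M$ large. On the $M$-truncated log-densities, A5 gives the open cover $\{G:\|\log f_G^{(M)}-\log \Psi^{(M)}\|_\infty<\epsilon\}$ of $\{G\}$; by A1 and Prokhorov the class $\{G\}$ is compact in the weak topology, so a finite subcover $\Psi_1,\dots,\Psi_L$ is extracted. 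For each fixed $\Psi_k$, under $G_1^{n_j}$ the $Y_i$ are i.i.d.\ with marginal $f_{G^{n_j}}$, so the fourth central moment of $\sum_i\log f_{\Psi_k}^{(M)}(Y_i)$ is $O(n_j^2)$ uniformly in $k$; Markov then gives $P(\cdot >\epsilon)=O(n_j^{-2})$, which is summable, and Bonferroni over the finite cover plus Borel--Cantelli yields a.s.\ uniform convergence on the cover. The $\epsilon$-approximation from A5 plus the truncation control then extends uniform convergence to the entire class $\{G\}$.

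To transfer to the without-replacement law $G_2^{n_j}$, I would use a Rao--Blackwell-type comparison: conditioning the with-replacement scheme on the multiplicities $(m_1,\dots,m_{n_j})$ and applying convexity of $x\mapsto x^4$, the fourth central moment of $S_{n_j}=\sum_i\log f_{\Psi_k}^{(M)}(Y_i)$ under $G_2^{n_j}$ is dominated by its counterpart under $G_1^{n_j}$; the expectations already coincide by the identity $E_{G_1^{n_j}}\sum_i\psi(Y_i)=E_{G_2^{n_j}}\sum_i\psi(Y_i)$ recorded in the set-up. The same summable Markov bound therefore passes to $G_2^{n_j}$, giving the uniform SLLN under both joint laws. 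Plugging $G=\hat G^{n_j}$ into this uniform SLLN and combining with A5 (which makes $\|\log f_{\hat G^{n_j}}^{(M)}-\log f_{\hat G^0}^{(M)}\|_\infty\to 0$ along the subsequence) and A3 (which gives $f_{G^{n_j}}\Rightarrow_w f_{G^0}$), bounded convergence on the truncation produces $E_{G^{n_j}}\log f_{\hat G^{n_j}}(Y)\to E_{G^0}\log f_{\hat G^0}(Y)$, proving (i). Part (ii) is the identical argument with $\hat G^{n_j}$ replaced by $G^{n_j}$.

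The principal obstacle will be keeping the finite-cover/Borel--Cantelli step genuinely uniform \emph{simultaneously} in the argument $G$ of $\log f_G$ and in the varying marginal $f_{G^{n_j}}$ of $Y$, without letting the truncation level $M$ or the cover size $L$ depend on $n_j$. The Rao--Blackwell comparison cleanly handles the passage from sampling with to sampling without replacement, but the bookkeeping of the interleaved limits $\epsilon\downarrow 0$, $M\uparrow\infty$ and $n_j\to\infty$ must be arranged so that the triangular-array nature of the problem does not spoil the summability needed for Borel--Cantelli.
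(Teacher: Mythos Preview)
Your proposal is correct and follows essentially the same route as the paper: truncate at level $M$ using the envelope $h$ and A2, extract a finite cover via A5 and compactness of $\{G\}$, apply fourth-moment Markov plus Bonferroni and Borel--Cantelli to get the uniform SLLN under $G_1^{n_j}$, and transfer to $G_2^{n_j}$ by the Rao--Blackwell comparison of fourth moments. The paper organizes the steps in the same order and proves the convergence of expectations $E_{G^{n_j}}\log f_{\hat G^{n_j}}(Y)\to E_{G^0}\log f_{\hat G^0}(Y)$ by a change-of-measure (likelihood-ratio) argument under the truncation, which is a minor variant of your bounded-convergence sketch.
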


\begin{proof}

We prove part i).

 First we show that:
\begin{equation} \label{eqn:first}
\lim E_{{G}^{n_j}} \log(f_{\hat{G}^{n_j}}(Y))= E_{{G}^0} \log( f_{\hat{G}^0}(Y)).
\end{equation}
Note that by A2 for every $\epsilon>0$, there exists  large enough $m>0$, such that for every
$G', G'' \in \{ G\}$,   $$\bigl\vert E_{G'} \log f_{G''}(Y) \times I(h(Y) \leq m) - E_{G'} \log f_{G''}(Y)  \bigr\vert<\epsilon,$$
here $I$ is an indicator function.

Hence,  (\ref{eqn:first}) is implied by the following,
\begin{eqnarray*}
&&\hspace{-5em}\lim_{n_j} E_{{G}^{n_j}} \log(f_{\hat{G}^{n_j}}(Y))\times I(h(Y) \leq m) \\ &=&\lim_{n_j} E_{{G}^{0}} \log(f_{\hat{G}^{n_j}}(Y))  \times
I(h(Y) \leq m) \times \frac{ f_{{G}^{n_j}}(Y) }
{f_{{G}^0}(Y) }\\
&=&E_{G^0} \lim_{n_j} \log(f_{\hat{G}^{n_j}}(Y)) \times  I(h(Y) \leq m) \times \frac{ f_{{G}^{n_j}}(Y) }
{f_{{G}^0}(Y) } \\
&=&
 E_{{G}^0} \log( f_{\hat{G}^0}(Y))  \times  I(h(Y) \leq m).
\end{eqnarray*}
The above is by Lebesgue dominating convergence theorem, applying A2 .  Note that, the likelihood ratio $\frac{ f_{{G}^{n_j}}(Y) }
{f_{{G}^0}(Y) }$ is bounded when $h(Y) \leq m$.  Equation (\ref{eqn:first}) follows by  letting $m \rightarrow \infty$ and utilizing A2.
%and realizing that  $\hat{G}^{n_j} \Rightarrow G^0$
%implies $f_{\hat{G}^{n_j}}(y) \rightarrow f_{G^{0}}(y)$ and
%$\frac{f_{G^{n_j}}(y)}{f_{G^0}(y)} \rightarrow 1$ for every $y$.

Under $G^{n_j}_1$, the proof will follow, by Borel Cantelli, if we show that for  every $\epsilon>0$:
\begin{equation} \label{eq:main2} \sum_{n_j} P_{{G}^{n_j}_1} \Bigl( \Bigl\vert\frac{1}{n_j}
\sum_{i=1}^{n_j} \log(f_{\hat{G}^{n_j}} (Y_i))- E_{{G}^{n_j}} \log(f_{\hat{G}^{n_j}} (Y))\Bigr\vert>\epsilon\Bigr) <\infty.\end{equation}

The complication in the above is that in the terms $f_{\hat{G}^{n_j}}  (Y_i)$, the random function $f_{\hat{G}^{n_j}}$ depends on its
argument $Y_i$.
We will prove (\ref{eq:main2}) through the following steps.

In fact, the convergence in equation (\ref{eq:main2}) is of a sub-series $n_j$, but also the
original series converges. Indeed,
in the sequel we consider the original series with  a general index $n$.
Let $\log(\Psi) \in \{ \log(f_G), \; G \in  \{G \} \}$, be a {\it fixed} function, in particular, independent of the observations. Then for every $\epsilon>0$
 $$P_{ G_1^{n} } \Bigl(\Bigl\vert\frac{1}{n} \sum_{i=1}^{n} \log(\Psi(Y_i)) -
E_{G^{n}} \log( \Psi(Y))\Bigr\vert >\epsilon\Bigr) <
\kappa/n^2,$$ for a suitable large enough $\kappa$.
The above is by is by Markov inequality, utilizing the bounded fourth moment assumption A2,
and the fact that  under ${ G_1^{n} }$ (sampling with replacement), $Y_i, \;  i=1,...,n,$ are independent.

Note, that by tightness, $\{  G\}$ is compact under the weak convergence topology, since that every sequence $G^k$ has a converging sub-sequence.
For every $\epsilon>0$ and $M>0$, by A5 and compactness of $\{G\}$, we may find  a finite cover of $L$ open sets, and corresponding functions
$\Psi_1,...,\Psi_L, \; \Psi_i \in \{ f_G \}$,
such that for every $f_0 \in \{f_G^{}\}$
\begin{equation} \label{eqn:infty}
 \inf_{\Psi \in \{ \Psi_1,...,\Psi_L \} } \Vert\log(f_0^{(M)})- \log(\Psi^{(M)})\Vert_\infty <\epsilon. \end{equation}

Next, since $L$ is finite, the above considerations coupled with Bonferroni,  imply that:
\begin{equation}
P_{{G}^{n}_1} \Bigl( \max_{\Psi \in \{\Psi_1,...,\Psi_L \} }  \Bigl\vert\frac{1}{n} \sum_{i=1}^{n} \log(\Psi(Y_i))
p- E_{{G}^{n}} \log(\Psi (Y))\Bigr\vert>\epsilon\Bigr) < \kappa/n^2,
\end{equation}
for a suitable large enough $\kappa$.

Now we show that in Equation (\ref{eq:main2}), for large enough $M$, we may neglect the terms where $|\log(f_{G^{n_j}}(Y_i))|>M$.
For every $\epsilon>0$, for large enough $M$, a.s.,
$$ \limsup \frac{1}{n}\sum_{i:\; |\log(f_{\hat{G}^{n}} (Y_i))|\geq M   }
|\log(f_{\hat{G}^{n}} (Y_i))| < \epsilon.$$
This follows since $|\log(f_{\hat{G}^{n}} (Y_i))| \leq h(Y_i)$, and by A2,
coupled with Borel Cantelli.
Hence, in the following, considering large $M$, we may neglect $$\frac{1}{n}\sum_{i:  \; |\log(f_{\hat{G}^{n}} (Y_i))|\geq M   }
|\log(f_{\hat{G}^{n}} (Y_i))|.$$

By (\ref{eqn:infty}) we may conclude, by taking large enough $M$, that for  every $\epsilon>0$,

\begin{equation} \label{eq:main4}
P_{{G}^{n}_1} \Bigl( \sup_{\Psi \in \{\ f_G\} }  \Bigl\vert\frac{1}{n} \sum_{i=1}^{n} \log(\Psi(Y_i))
- E_{{G}^{n}} \log(\Psi(Y))\Bigr\vert
>\epsilon\Bigr) < \kappa/n^2,
\end{equation} for a suitable large enough $\kappa$.

Equation (\ref{eq:main2}) follows by the last equation and the above.

The Lemma is now implied by Borel Cantelli, for the case $G^{n_j}_1$, of sampling with replacement.

The convergence under $G^{n_j}_2$ is obtained by comparing sampling with replacement to sampling without replacement. Let $S_{n_j}=\sum_{i=1}^{n_j} \log(\Psi(Y_i))$. Then,
$E_{G^{n_j}_1 } S_{n_j}^4 \geq E_{G^{n_j}_2 } S_{n_j}^4$.
It may be seen by applying Rao-Blackwell on the convex function $g(S_{n_j})= S_{n_j}^4$, when conditioning on $k_i, \; i=1,...,n_j$, where $k_i$ is the number of times $\theta_i$ was sampled
in the sampling with replacement process, that defines $G^{n_j}_1$. The implied bounded 4'th moment
of $S_{n_j}$ under $G^{n_j}_2$, coupled with (\ref{eq:eq}), implies (\ref{eq:main2}), along the lines of the above, under $G^{n_j}_2$.
This concludes our proof.
\end{proof}

\newpage

{\bf \Large References:}

\begin{list}{}{\setlength{\itemindent}{-1em}\setlength{\itemsep}{0.5em}}

\item
P.J. Bickel, C. A. Klassen, Y.Ritov and J.A. Wellner (1998).  Efficient and Adaptive Estimation For Semiparametric Models. Springer-Verlag NY.
\item
Brown, L.D. and Greenshtein, E. (2009). Non parametric
empirical Bayes and compound decision
approaches to estimation of high dimensional vector of normal
means. {\it Ann. Stat.} {\bf 37}, No 4, 1685-1704.
%\item
%Brown L.D., Greenshtein, E. and Ritov, Y. (2013). The Poisson compound decision revisited. %{\it JASA.} {\bf 108} 741-749.
\item
Chen, J. (2017).  ``Consistency of the MLE under Mixture Models.'' Statist. Sci. 32 (1) 47 - 63.
\item
Dicker, L. and Zhao, S. (2014). Nonparametric empirical Bayes and maximum likelihood
estimation for high-dimensional data analysis. ArXiv preprint arXiv:1
%\item
%Efron, B. (2019). Bayes, Oracle Bayes, and Empirical Bayes. {\it Stat. Science}, {\bf 34},
%Number 2, 177-201.
%Greenshtein, E. and Ritov, Y. (2004). Persistence in high-dimensional linear predictor selection and the virtue of overparametrization. {\it Bernoulli}, Vol 10, Number 6 (2004), 971-988.
%\item
%Greenshtein, E. (2006). Best subset selection, persistence in high-dimensional statistical learning and optimization under l1 constraint. {\it Ann.Stat.}
%Volume 34, Number 5, 2367-2386.
\item
Donoho, D. L. and Low, M. G. (1992). Renormalization exponents  and optimal pointwise rates of convergence. {\it Ann. Stat.} {\bf 20}, No 2, 944-970.
\item
B. Efron. (2014). Two modeling strategies for empirical Bayes estimation. Statistical science: a
review journal of the Institute of Mathematical Statistics, 29(2):285.
\item
Fan, J. (1991). On the optimal rates of convergence for nonparametric deconvolution problems. {\it Ann. Stat.} {\bf 19} No.3, 1257-1272.
\item
Greenshtein, E. and Itskov, T (2018), Application of Non-Parametric  Empirical
Bayes to Treatment of Non-Response. {\it Statistica Sinica} 28 (2018), 2189-2208.
\item
Gu, J. and Koenker, R. (2017). Unobserved Heterogeneity in Income Dynamics: An Empirical Bayes Perspective. {\it Journal of Business and Economics Statistics}. Volume 35, 2017 - Issue 1
\item
Heinrich, Philippe, and Jonas Kahn. ”Strong identifiability and optimal minimax rates for finite mixture
estimation.” Annals of Statistics 46.6A (2018): 2844-2870.
\item
 Ignatiadis Nikolaos and Wager Stefan (2021). Confidence Intervals for Nonparametric Empirical Bayes Analysis. To appear in JASA.
\item
Jiang, W. and Zhang, C.-H., (2009), General maximum likelihood empirical Bayes estimation of normal means.
{\it Ann.Stat.}. {\bf 37} No. 4, 1647-1684.
\item
Karlis, D. and Xekalaki, E. (2005), Mixed Poisson Distributions.
{\it International Statistical Review}, 73, 1, 35–58, Printed in Wales by Cambrian Printers.
\item
Kiefer, J. and Wolfowitz, J. (1956). Consistency of the maximum likelihood estimator in the presence of infinitely many incidental parameters. {\it Ann.Math.Stat.}
27 No. 4, 887-906.
\item
Koenker, R. and Mizera, I. (2014). Convex optimization, shape constraints,
compound decisions and empirical Bayes rules. {\it JASA } 109, 674-685.
\item
Laird, N. (1978). Nonparametric maximum likelihood estimation of a mixing distribution. {\it JASA}  78, No 364, 805-811.
\item
Lindsay, B. G. (1995). Mixture Models: Theory, Geometry and Applications.
Hayward, CA, IMS.
\item
Little, R.J.A and Rubin, D.B. (2002). Statistical Analysis with Missing Data.
New York: Wiley
\item
Long, Feng and Lee, H. Dicker (2018).
Approximate nonparametric maximum likelihood for mixture models: A convex optimization approach to fitting arbitrary multivariate mixing distributions. {\it Computational Statistics \& Data Analysis} 122: 80-91.
%\item
%Robins, J. M. and  Ritov, Y (1997). Toward a curse of dimensionality appropriated
%(CODA) asymptotic theory for semi-parametric models. {\it Statistics in Medicine},
%16:285–319.
%\item
%Rosenbaum, P. R. and Rubin, D. B. (1983). The central role of the propensity score in observational studies for causal effects. {\it Biometrika,} Vol 70, Issue 1, April 1983, Pages 41–55.
%\item
%Teicher, H. (1963). Identifiability of finite mixtures. {\it Ann. Math. Stat.},
%{\bf 34}, No. 4, 1265-1269.
%\item
%Wolter, K. M. (1985). Introduction to Variance estimation. Second edition, Springer.
\item
J.Pfanzagl (1993). Incidental Versus Random Nuisance Parameters. {\it Ann Stat}, 21, 1663-1691.
\item
Sujayam Saha. Adityanand Guntuboyina. "
``On the nonparametric maximum likelihood estimator for Gaussian location mixture densities with application to Gaussian denoising.'' Ann. Statist. 48 (2) 738 - 762, April 2020. https://doi.org/10.1214/19-AOS1817
\item
Robbins, H. (1951). Asymptotically subminimax solutions of compound decision problems. In Proceedings of the Second Berkeley Symposium on Mathematical Statistics and
Probability, 1950 131–148. Univ. California, Berkeley. MR0044803
\item
Robbins, H. (1956). An empirical Bayes approach to statistics. In Proc. Third Berkeley
Symp. 157–164. Univ. California Press, Berkeley. MR0084919
\item
Robbins, H. (1964). The empirical Bayes approach to statistical decision problems. Ann.
Math. Statist. 35 1–20. MR0163407
\item
Zhang, C.-H. (1997). Empirical Bayes and compound estimation of a normal mean.
Statist. Sinica 7 181–193.
\item
Zhang, C-H. (2005). Estimation of sums of random variables: Examples and information bounds. {\it Ann. Stat.} {\bf 33}, No.5. 2022-2041.

\end{list}

\end{document}